%
\documentclass[10pt]{article}
\usepackage{amssymb}
\usepackage{amsmath}
\usepackage{amsthm}
\usepackage[textwidth=16cm, textheight=24cm]{geometry}

\usepackage[english]{babel}
\usepackage[utf8]{inputenc}
\usepackage[T1]{fontenc}
\usepackage{listings}
\vfuzz4pt 
\hfuzz4pt 
\newtheorem{thm}{Theorem}
\newtheorem{cor}[thm]{Corollary}
\newtheorem{lem}[thm]{Lemma}

\def\Hom{\operatorname {Hom}}

\def\dim{\operatorname{dim}}

\def\Spec{\operatorname{Spec}}

\def\ideal{\lhd}

    \def\tensor{\otimes}
    
    \def\into{\hookrightarrow}

    \begin{document}

    \title{Local finite dimensional Gorenstein $k$-algebras having Hilbert function $(1,5,5,1)$ are smoothable}
    \author{Joachim Jelisiejew\thanks{supported by the project
    ``Secant varieties, computational complexity, and toric degenerations''
    realised within the Homing Plus programme of Foundation
    for Polish Science,
    co-financed from European Union, Regional
    Development Fund.}}
    \maketitle

    \def\Hilb{\mathcal{H}ilb\,}
    \def\Hilbd{\Hilb_d\ \mathbb{P}^n}
    \def\Hilbdk{\Hilb_d\ \mathbb{P}^n_k}
    \begin{abstract}
        Let $k$ be an algebraically closed field of characteristic $0$. The
        question of irreducibility of the punctual Hilbert scheme $\Hilbdk$ and its Gorenstein locus
        for various $d$ was studied in
        \cite{CEVV8, CN9, CN10, CN11}.
        In this short paper we prove that the subschemes corresponding to the
        Gorenstein algebras having Hilbert function $(1,5,5,1)$ are smoothable i.e. lie
        in the closure of the locus of smooth subschemes. Among the Gorenstein algebras
        of length $12$ the smoothability of
        algebras having such Hilbert function seems to be the most inapproachable
        using non-direct tools e.g. structural theorems.
    \end{abstract}

    For a local and finite dimensional (abbreviated f.d.) $k$-algebra $(A, \mathfrak{m}, k)$ we define its Hilbert
    function $h_A$ as the Hilbert function of $\operatorname{gr}_\mathfrak{m}(A)$, then
    $h_A(n) = 0$ for $n \gg 0$ and the function is commonly written as the
    vector of its non-zero values.
    The \emph{socle degree} of the algebra $A$ is defined by $d := \max\{n: h_A(n) \neq 0\}$. The
    algebra $A$ is \emph{Gorenstein} iff it
    is injective as an $A$-module.
    Although Gorenstein algebras need not be graded in this paper we encounter
    only graded ones.

    \def\Hilbzero{\Hilb^\circ_d\ \mathbb{P}^n}
    \def\Hilbd{\Hilb_d\ \mathbb{P}^n}
    The Hilbert scheme $\Hilbd$ parametrizing subschemes of
    length $d$ has an irreducible component $\Hilbzero$ being the closure of smooth
    subschemes, hence the question of irreducibility is the question of
    equality. A scheme $R = \Spec A \subseteq \mathbb{P}^n$ of length $d$ is
    called \emph{smoothable} if $[R] \in \Hilbzero$. This is
    equivalent to the existence of an abstract smoothing of the algebra $A$
    i.e. a flat family of $k$-algebras with the general fiber smooth
    and the special fiber isomorphic to $A$ (see \cite{CEVV8} for details).

    Every algebra having
    Hilbert function $h_A = (1, 5, 5, 1)$ can be embedded into $\mathbb{A}_k^5
    \into \mathbb{P}_k^5$, thus we fix
    a $5$-dimensional $k$-linear space with basis
    \[V := \bigoplus_{i=1}^5 kx_i\mbox{ and a dual space with a dual basis }V^*
    := \bigoplus_{i=1}^5 ka_i,\mbox{ so that } a_i(x_j) = \delta_{i,j}.\]
    Finally we take $W := V^* \oplus kz$ obtaining an inclusion $V^* \subseteq \mathbb{P}W$.

    \def\SVprim{S^\bullet V'}
    \def\SV{S^\bullet V}
    \def\SVstar{S^\bullet V^*}
    The study of f.d.\ Gorenstein algebras is closely related to the concept of
    inverse systems (due to Macaulay) and apolar ideals briefly explained
    below.
    The natural pairing $V \tensor V^* \to k$ may be extended to
    \def\hook{\lrcorner}
    \[\hook:\SVstar \tensor \SV \to \SV.\]
    by viewing $a_i$ as a partial derivation operator $\partial/\partial x_i$
    acting on $\SV$.
    Fixing a element $F\in \SV$ (not necessarily
    homogeneous) of total degree $d$ and restricting
    $\hook$ to $\SVstar \tensor kF$ we obtain a linear map $\SVstar \to \SV$.
    Its kernel $I_F$ is an ideal of $\SVstar$ called
    the \emph{apolar ideal} of the form $F$ and the residue algebra $\SVstar / I_F$
    is called the \emph{apolar algebra} of $F$. It is a local f.d. Gorenstein $k$
    algebra with socle degree
    $d$ and all such algebras arise this way (see \cite[Chap. 21]{Eisenbud} for
    details). If $F$ was homogeneous with respect to the total degree then
    $I_F$ is homogeneous and the apolar algebra is graded.

    \newpage
    \section{Smoothability}

    The main result of the paper is
    \begin{thm}
        Local finite dimensional Gorenstein $k$-algebras having Hilbert function $(1,5,5,1)$ are
        smoothable.
        \label{mainthm}
    \end{thm}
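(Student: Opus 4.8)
The plan is to recast the problem through Macaulay's inverse systems, reduce it to a single well-chosen algebra, and smooth that one explicitly. By the apolarity dictionary recalled above, every local Gorenstein algebra with Hilbert function $(1,5,5,1)$ is the apolar algebra $A_F = \SVstar/I_F$ of a polynomial $F\in S^{\le 3}V$ of total degree $3$; writing $F=F_3+F_2+F_1+F_0$ by total degree, a short initial-ideal computation shows that $h_{A_F}=(1,5,5,1)$ holds precisely when the leading cubic $F_3$ is nondegenerate, i.e.\ when no nonzero linear form annihilates it (equivalently $F_3$ cannot be written in fewer than five variables), the lower-order terms being irrelevant to the Hilbert function since $h_{A_F}(2)=h_{A_F}(1)$ by Gorenstein symmetry. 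First I would record that the set $U$ of such $F$ is open and dense in the affine space $S^{\le 3}V$, hence irreducible, and that $F\mapsto[\Spec A_F]$ is a morphism from $U$ to $\Hilbd$ (here $d=12$, $n=5$). Its image $\mathcal{Z}$ is therefore irreducible; since smoothability is intrinsic and every algebra in question occurs, in its apolar embedding, as a point of $\mathcal{Z}$, it suffices to prove $\mathcal{Z}\subseteq\Hilbzero$. Note that $\mathcal{Z}$ carries the graded algebras (from homogeneous $F=F_3$) and the non-graded ones on an equal footing.

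The reduction step I would then isolate as follows. Recall that $\Hilbzero$ is a \emph{closed} irreducible component of dimension $nd=60$. Suppose we can exhibit one $F_0\in U$ for which $A_0:=A_{F_0}$ is simultaneously (i) smoothable and (ii) unobstructed, meaning $\dim_k\Hom_{\SVstar}(I_{F_0},A_0)=60$. Then $[\Spec A_0]$ is a smooth point of $\Hilbd$, so it lies on a unique irreducible component, which by (i) is $\Hilbzero$; consequently a whole Zariski neighbourhood of $[\Spec A_0]$, and in particular its trace on $\mathcal{Z}$, is contained in $\Hilbzero$. As $\mathcal{Z}$ is irreducible and $\Hilbzero$ is closed, this forces $\mathcal{Z}\subseteq\Hilbzero$, which is the theorem. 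The virtue of this formulation is that a single, even graded, choice of $F_0$ suffices: unobstructedness at one point controls the generic point of $\mathcal{Z}$ and then, by closedness, propagates back down to every member, graded or not.

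It remains to choose $F_0$ and verify (i) and (ii), which is the heart of the matter. For (ii) I would take an explicit generic-looking cubic $F_0$ and compute the tangent space $\Hom_{\SVstar}(I_{F_0},A_0)$ directly; this is a finite linear-algebra problem over $k$, well suited to a computer-algebra check, and the aim is to hit exactly $60$, which by upper semicontinuity of fibre dimension is then the generic value along $\mathcal{Z}$. For (i) I would produce an explicit flat degeneration realizing a smoothing: the most promising route is to deform $\Spec A_0$ to a disjoint union $\Spec B\sqcup\Spec C$ of two Gorenstein schemes of length at most $11$, each smoothable by the results of \cite{CEVV8, CN9, CN10, CN11}, whence $A_0$ is smoothable; absent a clean such cleaving, one constructs by hand a one-parameter family with special fibre $A_0$ and general fibre a union of distinct points.

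I expect step (i) to be the main obstacle. The Hilbert function $(1,5,5,1)$ is the \emph{compressed} (maximal) one for socle degree $3$ and embedding dimension $5$, which is exactly why the structural theorems stop short of it and why the smoothing must be exhibited concretely. A further warning is that cubics in five variables form one of the Alexander--Hirschowitz exceptional cases (generic Waring rank $8$ rather than the expected $7$), so the naive idea of smoothing $A_0$ by degenerating $F_0$ to a generic sum of cubes of linear forms does not apply unmodified; the deformation must be engineered by hand. Verifying flatness of the resulting family, and checking that its general fibre genuinely splits into smaller smoothable pieces, is where the real work lies.
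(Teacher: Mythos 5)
Your reduction framework coincides with the paper's: an irreducible locus containing every algebra in question (you construct it directly from apolarity, where the paper invokes Iarrobino's theorem on compressed algebras), plus a single point that is both smoothable and has tangent space of dimension exactly $60$, forces the whole locus into the closure of the smooth subschemes. The logic of that reduction is sound, up to one slip: for non-homogeneous $F$ the apolar algebra is not graded, and the Hilbert function of a non-graded Gorenstein algebra need not be symmetric (e.g.\ $F = x_1^3 + x_2^2$ gives Hilbert function $(1,2,1,1)$), so your appeal to ``Gorenstein symmetry'' is not valid; the correct tool is Iarrobino's symmetric decomposition of the Hilbert function, which in socle degree $3$ gives $h_{A_F} = h_{A_{F_3}} + (0,b,0,0)$ and does yield your criterion that only nondegeneracy of the leading cubic $F_3$ matters.

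The genuine gap is that the proposal stops exactly where the proof must begin. Your conditions (i) and (ii) are existence claims, and you produce neither the form $F_0$, nor the flat family smoothing it, nor the tangent-space count: all three are explicitly deferred as ``the heart of the matter'' and ``where the real work lies''. But the reduction scheme itself (irreducibility plus one unobstructed smoothable point) is standard --- it is essentially the argument of the Hilbert-scheme-of-8-points paper --- so the mathematical content of the theorem is precisely the construction you omit. For comparison, the paper takes $F = x_2^2x_5 + x_2x_4^2 + x_1^2x_5 + x_3^2x_4$, lists the ten quadric generators of its apolar ideal, and smooths by perturbing the generator $a_2^2 - a_1^2$ to $a_2^2 - ta_1 - a_1^2$: this gives a flat family over $k[t]$ (flatness by a length count together with finiteness over $k[t]$), whose fibers for $t \neq 0$ split as a product of the apolar algebra of $F(x_1{=}0)$, which has Hilbert function $(1,4,4,1)$ and hence is smoothable by Casnati--Notari, and a length-$2$ algebra; smoothability of the product follows because smoothability can be checked on irreducible components. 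The tangent space is then computed by machine after using Gorenstein duality (self-injectivity of $S/I$) to reduce $\dim_k \operatorname{Hom}_S(I, S/I)$ to $\dim_k S/I^2 - \dim_k S/I$. Until you exhibit such an $F_0$, verify flatness and the splitting of your proposed degeneration, and carry out the $60$-dimensional count, no theorem has been proved.
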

    The proof is presented at the end of this section.
    The same result has been independently
    obtained by Cristina Bertone, Francesca Cioffi and Margherita Roggero by different means
    in \cite{BCR}.

    In the series of papers \cite{CN9, CN10, CN11} Gianfranco Casnati and Roberto Notari proved that
    the Gorenstein locus of $\Hilbd$ is irreducible for $d \leq
    11,\ n\geq 1$. The same authors communicated to us
    that using Theorem \ref{mainthm} they proved
    \begin{thm}[G. Casnati, R. Notari --- in preparation]
        \label{CN}
        For $d \leq 12$ and $n\geq 1$ the locus of $\Hilbd$ consisting of Gorenstein schemes having length $d$ is irreducible.
    \end{thm}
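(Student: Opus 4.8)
The plan is to prove smoothability of every Gorenstein algebra with Hilbert function $(1,5,5,1)$ by reducing to the apolar picture and then either exhibiting an explicit smoothing or degenerating to an already-known smoothable algebra. By the Macaulay correspondence explained in the excerpt, each such algebra is the apolar algebra $\SVstar/I_F$ of a cubic form $F \in \SV$, and the condition $h_A = (1,5,5,1)$ forces $F$ to have a nonzero cubic part spanning a one-dimensional space of third derivatives while the map $V^* \to V$ given by second derivatives of $F$ has full rank $5$. First I would set up a normal form for such $F$ under the action of $\mathrm{GL}(V)$ together with the addition of lower-order terms (which changes the algebra by an isomorphism, so we may assume $F$ is homogeneous). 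The problem thus becomes: classify cubics $F$ in five variables whose Hessian-type catalecticant $\mathrm{Cat}_{1,2}(F)\colon V^* \to V$ is an isomorphism, up to $\mathrm{GL}_5$-equivalence, and show each orbit is smoothable.

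The key technical input I would use is a \emph{relative} smoothability argument. Rather than treating each normal form in isolation, the plan is to organize the finitely many (or finitely many families of) normal forms into a partially ordered set under degeneration in $\Hilb_{12}\mathbb{P}^5$, and then show that the ``most generic'' algebra in each family admits a smoothing while every special member degenerates to a generic one. Concretely I would look for one-parameter families $F_t$ (a deformation of the defining cubic) whose generic apolar algebra has strictly larger Hilbert function support — for instance deforming into algebras with Hilbert function $(1,5,6)$ or into a direct sum of smaller Gorenstein algebras of lengths that are already known to be smoothable. Because smoothability is a closed condition on $\Hilbzero$ and $\Hilbzero$ is irreducible, it suffices to connect each $[A]$ to a single smooth (i.e.\ $12$ distinct reduced points) locus by a chain of such degenerations.

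The most concrete realization, and the one I would try first, is to use the fact that algebras of socle degree $2$ and low length, as well as Gorenstein algebras of length at most $11$, are already known to be smoothable (this is precisely the Casnati--Notari irreducibility input referenced in the excerpt, but for $d \le 11$). So I would attempt to degenerate a $(1,5,5,1)$ algebra to a reducible scheme $\Spec A' \sqcup \Spec A''$ of total length $12$ where each factor has length $\le 11$ and is Gorenstein, or to an algebra with a ``fatter'' but provably smoothable Hilbert function. The cleanest mechanism is to split off a point: find a flat family whose central fiber is the original algebra and whose nearby fibers are supported at two points, reducing length $12$ at one point to lengths that fall under the known irreducibility range.

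The hard part, I expect, will be the normal-form classification itself: cubics in five variables up to $\mathrm{GL}_5$ do not have a short finite list, so the genuine obstacle is controlling the possibly positive-dimensional families of nondegenerate cubics and ruling out a ``rigid'' algebra that resists every degeneration. I anticipate the decisive step is showing that no $(1,5,5,1)$ algebra is isolated in its stratum — equivalently, that the tangent space computation $\dim T_{[A]}\Hilb_{12}\mathbb{P}^5$ does not force $[A]$ onto a spurious component disjoint from $\Hilbzero$. I would therefore compute $\Hom(I_F, \SVstar/I_F)$ in degree $0$ for the generic normal form, verify that its dimension matches $5 \cdot 12 = 60$ (the dimension of $\Hilbzero$), and conclude that $[A]$ is a smooth point of $\Hilb_{12}\mathbb{P}^5$ lying on $\Hilbzero$; the special members then follow by semicontinuity and the degeneration chain above.
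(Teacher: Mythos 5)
Your proposal does not prove the statement at hand, and the mismatch is structural rather than a fixable detail. Theorem \ref{CN} asserts irreducibility of the \emph{whole} Gorenstein locus of the Hilbert scheme of degree-$d$ subschemes of $\mathbb{P}^n$ for every $d\leq 12$ and every $n\geq 1$, whereas your entire argument concerns only the algebras with Hilbert function $(1,5,5,1)$. Note that the paper itself contains no proof of Theorem \ref{CN}: it is quoted as a result of Casnati and Notari (in preparation), who obtain it by combining the paper's main result (Theorem \ref{mainthm}, smoothability of the $(1,5,5,1)$ algebras) with the structural machinery of \cite{CN9, CN10, CN11}, which disposes of every other Hilbert function of length-$12$ Gorenstein algebras (for example $(1,2,3,3,2,1)$ or $(1,3,4,3,1)$) and of all lengths $d<12$ in arbitrary embedding dimension, and which reduces irreducibility of the locus to such smoothability statements. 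None of that reduction or case analysis appears in your proposal; even if every step you outline were carried out in full, what you would have proved is Theorem \ref{mainthm} --- the intended \emph{input} to Theorem \ref{CN} --- not Theorem \ref{CN} itself.

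Measured against the paper's actual treatment of that input, your plan has two further gaps. First, it hinges on a $\mathrm{GL}_5$-classification of cubics in five variables with nondegenerate middle catalecticant, which you yourself flag as the hard part; indeed this family has positive-dimensional moduli and no usable normal-form list, and the paper never attempts it. Instead it invokes Iarrobino's theorem (\cite{iarrobino}, Thm I) that the locus $\mathcal{G}$ of such compressed algebras is an \emph{irreducible} subset of the Hilbert scheme, after which a single explicit form $F$ suffices, provided one proves two things about it: (a) its apolar algebra is smoothable, done in Corollary \ref{smoothable} by an explicit flat family $m-tx-x^2$ that splits off a length-$2$ factor from a $(1,4,4,1)$ algebra known to be smoothable by \cite{CN10} --- this is your ``split off a point'' idea, made precise; and (b) the tangent space at the corresponding point $[R]$ has dimension exactly $60$ (Lemma \ref{tangent}), so that $[R]$ cannot lie on two distinct components; since $\mathcal{G}$ is irreducible it lies in a single component, which must then be the smoothable one. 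Second, your closing appeal to semicontinuity runs the wrong way: tangent-space dimension is \emph{upper} semicontinuous, so the special members of your degeneration chains have \emph{larger} tangent spaces, and this by itself says nothing about their smoothability. What actually propagates smoothability from one point to the whole stratum is the irreducibility of $\mathcal{G}$ combined with the tangent-space bound at that one point --- precisely the two ingredients your proposal is missing or leaves unresolved.
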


    The zero-dimensional Gorenstein schemes and their smoothability is also
    investigated in relation with the study of secant varieties to Veronese
    reembeddings. As a consequence of the Theorem \ref{CN} we obtain
    \begin{thm}
        Let $r, n, d, i$ be integers and let $W \simeq \mathbb{C}^{n+1}$ be a
        vector space. Let $\sigma_r(v_d(\mathbb{P}W))$ be the \mbox{$r$-th} secant
        variety of the $d$-th Veronese embedding of $\mathbb{P}W$. If $d \geq
        2r$, $r \leq i \leq d - r$ and $r \leq 12$ then
        $\sigma_r(v_d(\mathbb{P}W))$ is set-theoretically defined by $(r+1)
        \times (r+1)$ minors of the $i$-th catalecticant matrix.
    \end{thm}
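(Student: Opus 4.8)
The plan is to realise $\sigma_r(v_d(\mathbb{P}W))$ as a catalecticant determinantal locus by proving two inclusions, the hard one resting on Theorem \ref{CN}. For a homogeneous $F \in S^d W$ let $\text{Cat}_i(F)\colon S^i W^* \to S^{d-i} W$, $\varphi \mapsto \varphi \hook F$, be the $i$-th catalecticant; its kernel is the degree-$i$ part of the apolar ideal $I_F$, so that $\rk \text{Cat}_i(F) = \dim (S^\bullet W^*/I_F)_i = h_{A_F}(i)$, where $A_F$ denotes the graded Gorenstein apolar algebra of $F$. Write $V_r(i) := \{[F] : \rk \text{Cat}_i(F) \le r\}$ for the locus cut out set-theoretically by the $(r+1)\times(r+1)$ minors of $\text{Cat}_i$. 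Since $F \mapsto \text{Cat}_i(F)$ is linear and $\text{Cat}_i(L^d)$ has rank one for every linear $L$, a form $L_1^d + \dots + L_r^d$ has catalecticant rank at most $r$; as $V_r(i)$ is closed this yields $\sigma_r(v_d(\mathbb{P}W)) \subseteq V_r(i)$, and it remains to prove $V_r(i) \subseteq \sigma_r(v_d(\mathbb{P}W))$.

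For the reverse inclusion I fix $[F] \in V_r(i)$ and use apolarity to replace $F$ by a small apolar scheme. The numerical hypotheses $d \ge 2r$ and $r \le i \le d-r$ are exactly what the catalecticant analysis of Iarrobino and Kanev requires: there is room on both sides of degree $i$ for the kernel $(I_F)_{\le i}$ to generate an ideal whose saturation defines a zero-dimensional subscheme $Z \subseteq \mathbb{P}W$ that is apolar to $F$ --- so that $F \in \langle v_d(Z)\rangle$ --- of length at most $r$, and which moreover may be taken to be Gorenstein. I regard establishing this reduction, with \emph{simultaneous} control of the length and of the Gorenstein property, as the technical heart of the argument.

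Granting the reduction, Theorem \ref{CN} becomes applicable: $Z$ is a Gorenstein zero-dimensional scheme of length $\le r \le 12$, hence smoothable. I would then fix a flat family $\{Z_t\}$ with $Z_0 = Z$ and $Z_t$ a union of $\text{length}(Z)$ reduced points for $t \neq 0$, and follow the family of linear spans $\langle v_d(Z_t)\rangle$. Because $d \ge 2r \ge 2\,\text{length}(Z)$, the subscheme $v_d(Z_t)$ imposes $\text{length}(Z)$ independent conditions in degree $d$ throughout the family, so the spans form a flat family of subspaces of $S^d W$ of constant dimension; equivalently the orthogonals $(I_{Z_t})_d$ vary flatly, whence $\langle v_d(Z_t)\rangle \to \langle v_d(Z)\rangle$ in the Grassmannian. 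Lifting $F \in \langle v_d(Z)\rangle$ through this convergence produces $F = \lim_t F_t$ with each $F_t \in \langle v_d(Z_t)\rangle$ a sum of at most $r$ $d$-th powers, so $[F] \in \sigma_r(v_d(\mathbb{P}W))$, completing the equality.

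The main obstacle is the structural step of the second paragraph, and the difficulty is that neither the length bound nor the Gorenstein property may be dropped. A length bound alone does not suffice, since non-smoothable zero-dimensional schemes already occur at length $8$, so without the Gorenstein hypothesis Theorem \ref{CN} could not be invoked. Conversely one cannot simply smooth the full apolar scheme $\Spec A_F$: although it is Gorenstein, its length $\sum_j h_{A_F}(j)$ is typically far larger than $r$ --- a generic sum of $r$ powers already has apolar algebra of length growing with $d$ --- so smoothing it would only place $F$ in $\sigma_\ell$ with $\ell = \text{length}(A_F) \gg r$. Extracting from $F$ a genuinely length-$\le r$ Gorenstein apolar scheme, which is possible precisely in the range $r \le i \le d-r$, is what makes Theorem \ref{CN}, rather than some unavailable general smoothability statement, the decisive input.
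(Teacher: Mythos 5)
Your route is the same one the paper takes, because the paper does not prove this theorem at all: it derives it by combining Theorem \ref{CN} (all Gorenstein schemes of length $\le 12$ are smoothable) with the general implication proved by Buczy\'nska--Buczy\'nski, citing \cite[Thm 1.6, Cor. 1.11]{BuBu}. Your first and third paragraphs correctly reconstruct the two formal halves of that machinery: the inclusion $\sigma_r(v_d(\mathbb{P}W)) \subseteq V_r(i)$ by subadditivity of catalecticant rank and closedness of the determinantal locus, and the passage from a smoothable length-$\le r$ apolar scheme $Z$ to $[F] \in \sigma_r(v_d(\mathbb{P}W))$ via a smoothing family, the fact that $d \ge 2r$ forces every length-$\le r$ scheme to impose independent conditions in degree $d$ (so the spans $\langle v_d(Z_t)\rangle$ have constant dimension and the span of the limit is the limit of the spans), and closedness of $\sigma_r$. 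Your closing remarks are also apt: the length bound alone cannot suffice (non-smoothable, non-Gorenstein schemes of length $8$ exist), and smoothing all of $\operatorname{Spec} A_F$ proves nothing about $\sigma_r$.

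The genuine gap is exactly where you locate it, and it cannot be waved through: your second paragraph asserts, without proof, that $\operatorname{rank}\operatorname{Cat}_i(F) \le r$ with $d \ge 2r$, $r \le i \le d-r$ yields a zero-dimensional \emph{Gorenstein} apolar scheme of length $\le r$. That assertion \emph{is} the main theorem of \cite{BuBu}; it is not an instance of ``the numerical hypotheses giving room on both sides.'' A proof requires at least three nontrivial steps: (i) Macaulay's growth bound (using $h_{A_F}(i) \le r \le i$) together with the Gorenstein symmetry of $h_{A_F}$ to show that $h_{A_F}$ is \emph{constant}, say equal to $s \le r$, on the whole interval $[i, d-i]$; (ii) a Gotzmann/regularity argument showing that a suitable truncation of $I_F$ saturates to the ideal of a zero-dimensional scheme $Z$ of length exactly $s$ with $(I_Z)_d \subseteq (I_F)_d$, which is where the hypothesis $d \ge 2r$ is consumed a second time; and (iii) the theorem, going back to Iarrobino--Kanev's theory of tight annihilating schemes, that an annihilating scheme whose length equals the maximal value of $h_{A_F}$ is automatically Gorenstein --- this last point is a theorem, not a formal consequence of the symmetry of $h_{A_F}$. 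As a standalone argument your proposal is therefore incomplete precisely at its load-bearing step; the paper discharges that same step by the citation. If you replace your second paragraph by a precise invocation of \cite[Thm 1.6, Cor. 1.11]{BuBu} (or supply the proof sketched above), the remainder of your argument is a correct derivation of the statement from Theorem \ref{CN}.
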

    See \cite[\S1.1]{BuBu}  for explanation of notation used in the statement
    and \cite[Thm 1.6, Cor. 1.11]{BuBu} for the resulting of this theorem from Theorem \ref{CN}.
    In this language Theorem \ref{mainthm} may be restated as: any form of
    degree $d\geq 3$ in
    $V \oplus \mathbb{C}z$, which
    can be written as $f\cdot z^{d-3}$ where $f\in S^3 V$, belongs to the $12$-th
    secant variety of $d$-th Veronese embedding of $\mathbb{P}(V \oplus
    \mathbb{C}z)$ (see \cite[\S8.1]{BuBu}).


\def\Hilbzerofive{\Hilb^\circ_{12}\ \mathbb{P}W}
    \smallskip
    The idea of the proof of Theorem \ref{mainthm} is to choose an appropriate
    apolar algebra $A = \SVstar/I_F$ and prove that it has Hilbert function $(1, 5, 5, 1)$,
    it is smoothable and the Zariski tangent space at
    the point $[\Spec A]\in \Hilbzerofive$ has dimension
    $60 = 12\cdot 5$.

    \begin{lem}
        The ideal $I_F$ apolar to the form

        \begin{equation}\label{form}
        F := x_2^2  \cdot  x_5 + x_2  \cdot  x_4^2 + x_1^2  \cdot  x_5 +
        x_3^2  \cdot  x_4\in S^3 V
    \end{equation}
        is generated by
        \[
        a_1 \cdot a_2,\ a_1 \cdot a_3,\ a_1 \cdot a_4,\ a_2^2 - a_1^2,\ a_2 \cdot a_3,\ a_2 \cdot a_4 -
        a_3^2,\ a_2 \cdot a_5 - a_4^2,\ a_3 \cdot a_5,\ a_4 \cdot
        a_5,\ a_5^2
        \]
        and its residue algebra $A$ has the Hilbert function $h_A = (1,5,5,1)$.
        \label{lemideal}
    \end{lem}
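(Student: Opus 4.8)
The plan is to read off the Hilbert function directly from the apolarity pairing, and then to show by a graded dimension count that the ten displayed quadrics already generate the whole apolar ideal. Throughout I would use that $h_A(i)=\dim A_i$ equals the rank of the catalecticant map $g\mapsto g\lrcorner F$ from $S^iV^*$ to $S^{3-i}V$.

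First I would compute $h_A$. The extreme degrees give $h_A(0)=h_A(3)=1$ because $F\neq 0$. In degree $1$ the map sends $a_j$ to $\partial F/\partial x_j$, and writing out the five first partials one sees they involve pairwise disjoint monomials, hence are independent and $h_A(1)=5$; since the two catalecticant maps in complementary degrees are mutually transpose (up to the standard identifications), $h_A(2)=h_A(1)=5$. This already gives $h_A=(1,5,5,1)$, and in particular $\dim A=12$ and $\dim(I_F)_2=15-5=10$.

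Next I would check that each of the ten quadrics annihilates $F$ by a one-line second-derivative computation (for example $(a_2^2-a_1^2)\lrcorner F=2x_5-2x_5=0$ and $a_5^2\lrcorner F=0$), so that the ideal $J$ they generate satisfies $J\subseteq I_F$. Their leading monomials in the order $a_1>\cdots>a_5$ are the ten distinct monomials $a_1^2,a_1a_2,a_1a_3,a_1a_4,a_2a_3,a_2a_4,a_2a_5,a_3a_5,a_4a_5,a_5^2$, so the quadrics are linearly independent and therefore form a basis of the $10$-dimensional space $(I_F)_2$; in particular $J_2=(I_F)_2$.

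The crux is to upgrade $J=I_F$ from degree $2$ to all degrees. Since $J\subseteq I_F$ there is a graded surjection $\SVstar/J\onto A$, so it suffices to prove that $B:=\SVstar/J$ satisfies $\dim B_3\le 1$ and $B_n=0$ for $n\ge 4$. The monomials not divisible by any of the ten leading terms span $B$; in degree $3$ these are the five monomials $a_2^3,a_3^3,a_3^2a_4,a_3a_4^2,a_4^3$. The main obstacle is precisely that the ten quadrics are \emph{not} a Gröbner basis, so this naive count overshoots ($5$ instead of $1$): the collapse appears only through relations obtained by factoring a cubic monomial in two different ways. Concretely I would verify, using the generators $a_2^2-a_1^2$, $a_2a_4-a_3^2$, $a_2a_5-a_4^2$ together with the vanishing quadrics, the congruences
\[
a_2^3\equiv a_3^3\equiv a_4^3\equiv a_3a_4^2\equiv 0,\qquad a_3^2a_4\equiv a_2a_4^2\equiv a_2^2a_5\equiv a_1^2a_5 \pmod J,
\]
so that $B_3$ is spanned by the single class $s:=a_1^2a_5$, which is nonzero already in $A$; hence $\dim B_3=1$. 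Finally, multiplying $s$ by each $a_i$ and reducing (each product contains one of the vanishing quadrics, e.g. $a_2 s=(a_1a_2)a_1a_5$ and $a_5 s=(a_5^2)a_1^2$) shows $B_4=0$, whence $B_n=0$ for $n\ge4$. Then $\dim B=1+5+5+1=12=\dim A$, the surjection $B\onto A$ is an isomorphism, and $J=I_F$.
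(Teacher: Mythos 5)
Your proof is correct (up to one easily repaired slip, noted below) and, at the decisive step, takes a genuinely different route from the paper. The shared skeleton is: $h_A(1)=5$ from the independence of the five partial derivatives; the full Hilbert function $(1,5,5,1)$ from a duality symmetry (the paper quotes $h(d-n)=h(n)$ for graded Gorenstein algebras, you use the equivalent fact that complementary catalecticant maps are mutual transposes); the ten quadrics annihilate $F$, so the ideal $J$ they generate sits inside $I_F$; and finally $J=I_F$ follows by comparing $\dim_k S^\bullet V^*/J$ with $\dim_k A=12$ through the graded surjection $S^\bullet V^*/J \twoheadrightarrow A$. Where the paper delegates the dimension count $\dim_k S^\bullet V^*/J=12$ to a Magma computation (\texttt{quotientDimension}), you prove the needed inequality $\dim_k S^\bullet V^*/J\le 12$ by hand: standard monomials with respect to the generators' leading terms bound each graded piece, and explicit reductions collapse the five degree-$3$ standard monomials to the single class of $a_1^2a_5$ and kill degree $4$. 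Your route buys a self-contained, computer-free, human-checkable proof; the paper's buys brevity and robustness against exactly the kind of slip you make once: your parenthetical claim that each product $a_i s$ ``contains one of the vanishing quadrics'' fails for $a_1\cdot s=a_1^3a_5$, which is divisible only by $a_1^2$ and $a_1a_5$, neither of which lies in $J$ (both have nonzero contraction with $F$). The product still vanishes, but only after the two-step reduction $a_1^3a_5\equiv a_1a_2^2a_5=(a_1a_2)\,a_2a_5\equiv 0 \pmod J$, using $a_2^2-a_1^2\in J$ first. With this patched, your argument is complete.
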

    \begin{proof}
        By computing the partial derivatives of $F$ we conclude that no linear
        form annihilates $F$, thus $h_A(1) = 5$. The Hilbert function $h$ of a
        local f.d.\ graded Gorenstein algebra having socle degree
        $d$ satisfies $h(d-n) = h(n)$ where $0\leq n\leq d$, thus $h_A =
        (1, 5, 5, 1)$.

        A~straightforward check shows that the given elements annihilate $F$.
        Denote by $I$ the ideal generated by these elements.
        A direct check or a~computation (see source
        \texttt{quotientDimension}) shows that the residue algebra of $I$ has
        dimension $12$ as a $k$-vector space so $I$ is the apolar
        ideal of $F$.
    \end{proof}

    \begin{lem}
        Let $A_0$ be a~finitely dimensional $k$-algebra and $A := A_0[x]/Jx$,
        where $J\ideal A_0$, be a~graded algebra.

        Let $f: = m - x^2$, where $m\in J$, then
        there exists a flat family over $k[t]$ with special fiber isomorphic
        to $A/f$ and a general fiber of the form $A_0[x]/I_1\cap I_2$, where
        $I_1 + I_2 = 1$, $A_0[x]/I_1  \simeq A_0/mJ$ and $A_0[x]/I_2  \simeq A_0/J$.
    \end{lem}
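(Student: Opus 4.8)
The plan is to write down an explicit one-parameter deformation of the relations defining $A/f$ and to verify its fibers directly. Concretely, I would set
\[
\mathcal{B} := A_0[x]\otimes_k k[t]\big/\big(\{jx - tj : j\in J\},\ x^2 - tx - m\big),
\]
regarded as a $k[t]$-algebra. At $t=0$ the relations specialize to $jx=0$ ($j\in J$) and $x^2=m$, so the special fiber is $A_0[x]/(Jx,\,x^2-m)=A/f$, as required. Deforming the relation $Jx$ to $jx=tj$ (rather than keeping $jx=0$) is exactly where the main obstacle lies: if one kept $jx=0$, then at every closed point $x$ would have to annihilate $J$, which is incompatible with one limit component being $A_0/J$, on which $x$ must act as the unit $t$. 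The relation $jx=tj$ permits this while still degenerating to $jx=0$ at $t=0$.

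Next I would analyse the general fiber by base-changing to $k[t,t^{-1}]$. Two derived identities are immediate: multiplying $jx-tj$ by $x$ and using $x^2=tx+m$ gives $jm=0$, hence $mJ=0$ in $\mathcal{B}$; and taking $j=m$ gives $mx=tm$. Setting $e:=1-x/t$ one computes $me = m-mx/t = 0$ and $e^2 = e + m/t^2$, whence $e^3=e^2$ and $\epsilon:=e^2$ is idempotent, yielding $\mathcal{B}[t^{-1}]\simeq \epsilon\,\mathcal{B}[t^{-1}]\times(1-\epsilon)\,\mathcal{B}[t^{-1}]$. On the factor $\epsilon=1$ one gets $m=me^2=(me)e=0$ and then $x=0$, so $jx=tj$ forces $J=0$ and the factor is $A_0/J$; on the factor $\epsilon=0$ one gets $x=t+m/t$, and $jx=tj$ forces only $jm=0$, so the factor is $A_0/mJ$. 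The two components are comaximal, giving $I_1+I_2=1$ with $A_0[x]/I_1\simeq A_0/mJ$ and $A_0[x]/I_2\simeq A_0/J$, as claimed.

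Finally I would establish flatness over $k[t]$ by proving $\mathcal{B}$ is free. Using $x^2=tx+m$ every element reduces to an $A_0[t]$-combination of $1$ and $x$; using $mJ=0$ and $jx=tj$ the constant part collapses to $A_0/mJ$ and the $x$-part to $(A_0/J)x$, so $\mathcal{B}$ is generated over $k[t]$ by $D:=\dim_k(A_0/mJ)+\dim_k(A_0/J)$ elements, and hence every fiber has dimension at most $D$. The special fiber $A/f$ has dimension exactly $D$ (one checks $A/f\simeq (A_0/mJ)\oplus (A_0/J)x$ directly), and the idempotent decomposition above shows every fiber over $t\ne 0$ also has dimension $D$. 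A finitely generated module over the PID $k[t]$ with constant fiber dimension is free, so $\mathcal{B}$ is free of rank $D$ and therefore flat over $k[t]$, completing the construction. The genuinely delicate points are the choice of the deformed relation $jx=tj$ and the verification that no unexpected relation makes the fiber dimension drop; both are controlled by the single derived identity $mJ=0$.
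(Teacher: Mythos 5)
Your family works, but it is not the paper's family, and the reason you give for building the more complicated one is mistaken. The paper takes $A[t]/(m - tx - x^2)$, i.e., it keeps the relation $Jx=0$ undeformed and perturbs only $x^2=m$. You claim this cannot produce a limit component $A_0/J$ because $x$ ``must act as the unit $t$'' there while annihilating $J$; but forcing $J=0$ is not an incompatibility --- it is exactly how the factor $A_0/J$ arises. In the paper's general fiber, the component cut out by $(f_\alpha, x+\alpha)$ has $x$ acting as the unit $-\alpha$, and the retained relation $Jx=0$ then kills $J$, yielding precisely $A_0/J$. (Ironically, in your own family $x$ acts as $0$, not as a unit, on the $A_0/J$ factor, contradicting your stated obstacle.) Concretely, the paper decomposes $(f_\alpha) = (f_\alpha, x^2)\cap(f_\alpha, x+\alpha)$ using $(x^2, x+\alpha)=(1)$, identifies $(f_\alpha,x^2)=(m-\alpha x)$ (possible because $mx=0$ in $A$) with quotient $A_0/mJ$, and $(f_\alpha,x+\alpha)=(m,x+\alpha)$ with quotient $A_0/J$, and then gets flatness from finiteness over $k[t]$ plus constancy of fiber length via \cite[Thm III.9.9]{Har}.

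Your alternative --- deforming both relations to $jx=tj$ and $x^2=tx+m$, splitting off the idempotent $\epsilon=(1-x/t)^2$ after inverting $t$, and proving freeness over the PID $k[t]$ by the structure theorem --- is correct in outline and buys a more elementary, self-contained flatness argument than citing Hartshorne, at the cost of a heavier computation. But it carries an obligation your write-up only gestures at: deriving relations \emph{inside} each factor shows only that the factors are quotients of $(A_0/J)[t,t^{-1}]$ and $(A_0/mJ)[t,t^{-1}]$; this is the unproved ``only'' in ``forces only $jm=0$''. To rule out further collapse --- and hence to get the fiber dimension $D$ for $t\neq 0$ that your freeness argument needs --- you must exhibit the outward homomorphisms: $a\mapsto \bar a$, $x\mapsto 0$ into $(A_0/J)[t,t^{-1}]$, and $a\mapsto \bar a$, $x\mapsto t+m/t$ into $(A_0/mJ)[t,t^{-1}]$; the latter kills $x^2-tx-m$ only because $(t+m/t)^2-t(t+m/t)-m = m^2/t^2$ and $m^2\in mJ$. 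With those two checks inserted, your proof closes up; without them, the claim that no unexpected relation drops the fiber dimension is not ``controlled by $mJ=0$'' alone.
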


    \begin{proof}
        Put $f_\alpha := m - \alpha x - x^2$ for any $\alpha\in k$.
        Consider $A[t]/(m - t\cdot x - x^2)$ over $k[t]$. The fiber over $t =
        0$ is the algebra $A/f$. Take $\alpha\in k\setminus\{0\}$. Since $(x^2,
        x+\alpha) = (1)$ and $x(m - \alpha x -x^2) = -\alpha x^2 - x^3$ the following equality of
        $A$-ideals holds
        \[
        (f_{\alpha}, x^2) \cap (f_{\alpha}, x + \alpha) = (f_{\alpha}, x^2)\cdot (f_{\alpha}, x+\alpha) = (f_{\alpha}, x^3
        + \alpha x^2) = (f_{\alpha}).
        \]
        Now $(f_{\alpha}, x^2) = (m-\alpha x, x^2) = (m - \alpha x)$ so that $A/(f_{\alpha},
        x^2)  \simeq A_0/mJ$ and $(f_{\alpha}, x + \alpha) = (m, x + \alpha)$, so that $A/(f_{\alpha}, x +
        \alpha)  \simeq A/(m, x + \alpha)  \simeq A_0/J$.

        Now $\dim_k A/f =  \dim_k A_0/mJ + \dim_k
        A_0/J$, thus
        the length of fibers is constant. The algebra $A[t]/(m - tx - x^2)$ is finite
        over $k[t]$ so by \cite[Thm III.9.9]{Har}
        the family $k[t] \to A[t]/(m - tx - x^2)$ is flat, which proves the
        claim.
    \end{proof}

    \newpage
    \begin{cor}
        \label{smoothable}
        The apolar algebra of the form $F$ defined in Lemma \ref{form} is smoothable.
    \end{cor}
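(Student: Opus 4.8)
The plan is to exhibit the apolar algebra $A = \SVstar/I_F$ of $F$ in the shape required by the preceding lemma and then to split it, through the resulting flat family, into pieces already known to be smoothable. First I would single out the variable $a_1$: reading off the generators listed in Lemma \ref{lemideal}, the generators involving $a_1$ are exactly $a_1 a_2,\ a_1 a_3,\ a_1 a_4$ and $a_2^2 - a_1^2$, while every remaining generator involves only $a_2, a_3, a_4, a_5$. This suggests setting $x := a_1$, letting $A_0$ be the graded Gorenstein algebra apolar to the cubic $G := x_2^2 x_5 + x_2 x_4^2 + x_3^2 x_4$ in the four variables $x_2, \dots, x_5$ (so that $A_0$ has Hilbert function $(1,4,4,1)$ and length $10$), taking $J := (a_2, a_3, a_4)\ideal A_0$ and $m := a_2^2 \in J$. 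With these choices the non-$a_1$ generators of $I_F$ are precisely the relations of $A_0$, while the $a_1$-generators read $Jx = (a_2,a_3,a_4)\,a_1$ and $x^2 - m = a_1^2 - a_2^2$; hence $A \simeq \big(A_0[x]/Jx\big)/(m - x^2)$, exactly the form to which the preceding lemma applies.

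Next I would check the numerical input. A short computation in $A_0$ gives $mJ = a_2^2\cdot(a_2,a_3,a_4) = 0$, so $A_0/mJ = A_0$, and $A_0/J \simeq k[a_5]/(a_5^2)$ has length $2$; in particular $\dim_k A = \dim_k(A_0/mJ) + \dim_k(A_0/J) = 10 + 2 = 12$, in agreement with Lemma \ref{lemideal}. Applying the preceding lemma then produces a flat family over $k[t]$ whose special fiber is $A$ and whose general fiber is $A_0[x]/(I_1\cap I_2)$ with $I_1 + I_2 = 1$. By the Chinese Remainder Theorem this general fiber is the product $A_0/mJ \times A_0/J \simeq A_0 \times k[a_5]/(a_5^2)$, i.e.\ the disjoint union of a length-$10$ fat point and a length-$2$ fat point supported at distinct points of $\mathbb{P}W$.

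It then remains to smooth the general fiber. The factor $A_0$ is Gorenstein of length $10 \le 11$, hence smoothable by the irreducibility of the Gorenstein locus of $\Hilbd$ established in \cite{CN9, CN10, CN11}; the factor $k[a_5]/(a_5^2)$ is a length-$2$ scheme, visibly smoothable as the flat limit of $\Spec k[a_5]/(a_5^2 - t)$. A disjoint union of smoothable zero-dimensional schemes is smoothable, so the general fiber lies in $\Hilbzerofive$. Finally, since $\Hilbzerofive$ is closed in the Hilbert scheme (being, by definition, the closure of the smooth locus) and $A$ is the flat limit of these smoothable general fibers, the point $[\Spec A]$ lies in $\Hilbzerofive$ as well; that is, $A$ is smoothable.

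The step I expect to be the main obstacle is the first one: recognising the correct presentation, namely that $a_1$ is the distinguished variable for which $A$ takes the form $\big(A_0[x]/Jx\big)/(m - x^2)$ with $m \in J$ and, crucially, $mJ = 0$, so that the general fiber splits off the small factor $k[a_5]/(a_5^2)$ and leaves the already-controlled Gorenstein algebra $A_0$. Once this structural splitting is in hand, the remaining steps — identifying $A_0$ as the length-$10$ algebra apolar to $G$, invoking \cite{CN9, CN10, CN11} for its smoothability, and passing from the general fiber back to the special fiber by closedness of $\Hilbzerofive$ — are routine.
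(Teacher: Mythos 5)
Your proposal follows the paper's own proof essentially step for step: the same distinguished variable $a_1$, the same application of the unlabeled deformation lemma with $A_0$ the apolar algebra of $F(x_1=0)$, $J=(a_2,a_3,a_4)$ and $m=a_2^2$, the same splitting of the general fiber by the Chinese Remainder Theorem into a length-$10$ Gorenstein algebra (smoothable by Casnati--Notari) and a length-$2$ algebra, and the same conclusion via smoothability on components and closedness of the smoothable locus.

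However, one justification in your key step is false as stated and needs a patch. You claim that ``the non-$a_1$ generators of $I_F$ are precisely the relations of $A_0$'', i.e.\ that the apolar ideal $I_G \lhd k[a_2,a_3,a_4,a_5]$ of $G = x_2^2x_5 + x_2x_4^2 + x_3^2x_4$ is generated by the six quadrics $a_2a_3$, $a_2a_4-a_3^2$, $a_2a_5-a_4^2$, $a_3a_5$, $a_4a_5$, $a_5^2$. It is not: all six lie in the ideal $(a_3,a_4,a_5)$, so the ideal they generate contains no power of $a_2$, whereas $a_2^3 \in I_G$; in fact the quotient of $k[a_2,a_3,a_4,a_5]$ by the six quadrics is infinite dimensional, while $A_0$ has length $10$. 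Consequently, matching lists of generators does not by itself yield the presentation $A \simeq \bigl(A_0[x]/Jx\bigr)/(m-x^2)$; one containment requires an argument. The patch is short: first, $I_G \subseteq I_F$, because any homogeneous operator of degree at least $2$ in $a_2,\dots,a_5$ annihilates $x_1^2x_5$, so annihilating $G$ implies annihilating $F = G + x_1^2x_5$ (and $I_G$ has no elements of degree $\leq 1$); second, every listed generator of $I_F$ lies in $I_G + (a_2,a_3,a_4)(a_1) + (a_2^2-a_1^2)$. Together these give $I_F = I_G + (a_2,a_3,a_4)(a_1) + (a_2^2-a_1^2)$, which is exactly the presentation you need; note, for instance, that the ``missing'' relation $a_2^3$ of $A_0$ is recovered inside $I_F$ as $a_2(a_2^2-a_1^2) + a_1\cdot a_1a_2$. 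Your subsequent computation $mJ = a_2^2\cdot(a_2,a_3,a_4) = 0$ is correct precisely because you perform it in the true apolar algebra $A_0$ and not modulo the six quadrics; with the latter choice of $A_0$ the whole argument would collapse. For what it is worth, the paper's proof of Corollary \ref{smoothable} contains a parallel, equally harmless imprecision (it asserts $J_0 \subseteq J_1 = (a_2,a_3,a_4)$, which fails since $a_5^2 \in J_0$), so once the patch above is inserted your argument coincides with the paper's.
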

    \begin{proof}
        \def\SVdualprim{S^\bullet V'^*}
        \def\SVdual{S^\bullet V^*}
        Let $V' \subseteq V$ be spanned by $a_2, a_3, a_4, a_5$.

        It follows from Lemma \ref{lemideal} that the apolar ideal $I_F$ of $F$ has the form $J_0 +
        J_1\cdot (a_1) + (a_2^2 - a_1^2)$ where $J_0, J_1 \ideal \SVdualprim$,
        $J_0 \subseteq J_1 = (a_2, a_3, a_4),\ a_2^2\in J_1$ and
         $a_2^2\cdot J_1 \subseteq J_0$.

        We identify $\SVdual$ with the polynomial algebra $\SVdualprim[a_1]$.
        Applying the previous lemma to the algebra $A_0 = \SVdualprim/J_0$,
        ideal $J := J_1$ and element $m := a_2^2$ we see that the apolar
        algebra $\SVdual/I_F$ is a~flat degeneration of the algebras
        of the form $\SVdual/I_1 \cap I_2$, where $I_1 + I_2 = 1$,
        $\SVdual/I_1  \simeq  \SVdualprim/J_0 +a_2^2\cdot J_1 = \SVdualprim/J_0$ and $\SVdual/I_2  \simeq
        \SVdualprim/J_1$.

        The quotient $\SVdualprim/J_0$ is canonically isomorphic to the apolar algebra to
        the form $F(x_1=0) = x_2^2  \cdot  x_5 + x_2  \cdot  x_4^2  + x_3^2
        \cdot  x_4$. One can check, as in proof of Lemma \ref{form}, that the
        Hilbert function of this algebra is $(1, 4, 4, 1)$ so it is Gorenstein
        and thus smoothable (see
        \cite{CN10}).
        Now a~dimension count shows that $\SVdual/I_2$ is a~$2$-dimensional
        algebra over $k$, so it is smoothable as well.

        Finally smoothability can be checked on irreducible components
        (\cite[\S4.1]{CEVV8ar}), so the smoothability of both $\SVdual/I_1$
        and $\SVdual/I_2$ implies
        the smoothability of $\SVdual/I_1 \cap I_2$ proving that $\SVdual/I_F$ is
        a~flat degeneration of smoothable algebras and thus it is smoothable.
    \end{proof}

    \begin{lem}
        \label{tangent}
        The tangent space to the Hilbert scheme of $V^*$ at point $[R]$
        corresponding to the scheme $R$,
        where $I_F$ is the apolar ideal to $F$
        defined in \ref{form} and $R := \Spec \SV^*/I_F$, has dimension $60$.
    \end{lem}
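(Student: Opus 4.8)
The plan is to identify the Zariski tangent space with a $\Hom$-module and then reduce its computation to finite-dimensional linear algebra over $k$. Recall that for a closed subscheme $R \subseteq \mathbb{P}W$ with ideal sheaf $\mathcal{I}_R$ the tangent space to the Hilbert scheme at $[R]$ is $\Hom_{\mathcal{O}_{\mathbb{P}W}}(\mathcal{I}_R,\mathcal{O}_R)$. Because $R = \Spec \SVstar/I_F$ is zero-dimensional and supported at the origin of the affine chart $V^* \subseteq \mathbb{P}W$ (the algebra $A := \SVstar/I_F$ being local), the sheaf $\mathcal{H}om(\mathcal{I}_R,\mathcal{O}_R)$ is concentrated at that single point and its global sections are read off from the affine coordinate ring. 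Writing $S := \SVstar = k[a_1,\dots,a_5]$, I would therefore use the identification
\[
T_{[R]} \cong \Hom_S(I_F, A),
\]
so that the lemma reduces to proving $\dim_k \Hom_S(I_F, A) = 60$.

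For the lower bound I would invoke smoothability. By Corollary \ref{smoothable} the point $[R]$ lies on the smoothable component $\Hilbzerofive$, whose dimension is $5\cdot 12 = 60$, being the closure of the locus of configurations of $12$ reduced points in the $5$-dimensional $\mathbb{P}W$. Hence $\dim_k T_{[R]} \geq 60$ automatically, and the whole content of the lemma is the reverse inequality $\dim_k \Hom_S(I_F, A) \leq 60$; once it is established, equality follows and $[R]$ is in fact a \emph{smooth} point of the Hilbert scheme.

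To bound $\Hom_S(I_F, A)$ I would exploit that $F$ is homogeneous, so $I_F$ and $A$ are graded with $A = A_0\oplus A_1 \oplus A_2 \oplus A_3$ of dimensions $(1,5,5,1)$, whence $\Hom_S(I_F,A)$ inherits an internal grading. Since $I_F$ is generated by the ten quadrics $g_1,\dots,g_{10}$ listed in Lemma \ref{lemideal}, a graded homomorphism $\phi$ of internal degree $j$ is determined by the tuple $(\phi(g_1),\dots,\phi(g_{10})) \in (A_{2+j})^{10}$, which is nonzero only for $j \in \{-2,-1,0,1\}$; the candidate tuples span a space of total dimension $10\cdot(1+5+5+1) = 120$. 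The homomorphism condition is exactly that the tuple be annihilated by every first syzygy of $I_F$: from a free presentation $S^{b} \xrightarrow{M} S^{10} \to I_F \to 0$ one obtains
\[
\Hom_S(I_F, A) = \ker\bigl(M^{\mathrm{T}}\colon A^{10} \to A^{b}\bigr),
\]
a graded $k$-linear map that I would analyse degree by degree.

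The principal obstacle is therefore twofold: first to determine the full module of first syzygies of $I_F$ (the linear relations among the ten quadrics, together with any relations of higher degree), and then to compute the rank of the induced map $M^{\mathrm{T}}$ in each internal degree $j \in \{-2,-1,0,1\}$. Each graded piece is a concrete finite matrix over $k$, so the computation terminates; it is, however, sizeable and is most safely carried out and cross-checked on a computer, in the spirit of the \texttt{quotientDimension} computation used in Lemma \ref{lemideal}. Summing the four kernel dimensions I expect to obtain $60$, which together with the lower bound above yields $\dim_k T_{[R]} = 60$.
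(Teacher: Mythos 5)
Your proposal is correct in outline but takes a genuinely different route from the paper at the decisive step. You agree with the paper up to the identification $T_{[R]} \cong \Hom_S(I_F, A)$, where $S = k[a_1,\dots,a_5]$ and $A = S/I_F$ (the paper reaches it by homogenizing to a saturated ideal in $k[W]$ and citing \cite[Thm 1.1b]{HarDef}, you by the support argument on the affine chart; both are fine). From there you propose to compute $\Hom_S(I_F,A)$ as $\ker\bigl(M^{\mathrm{T}}\colon A^{10}\to A^{b}\bigr)$ from a free presentation, which requires computing the full first syzygy module of the ten quadrics and then ranks of the induced matrices in each internal degree --- a sizeable machine computation that you do not actually carry out (``I expect to obtain $60$''). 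The paper avoids syzygies entirely by exploiting the Gorenstein property of $A$ itself: $\Hom_S(I_F,A) = \Hom_A(I_F/I_F^2, A)$, and since $A$ is a finite-dimensional Gorenstein algebra it is self-injective, so $\Hom_A(-,A)$ is exact and, by induction on length via small extensions (using $\Hom_A(k,A)=\operatorname{soc}(A)\simeq k$), it preserves $k$-dimension; hence
\[
\dim_k T_{[R]} \;=\; \dim_k I_F/I_F^2 \;=\; \dim_k S/I_F^2 - \dim_k S/I_F,
\]
which is exactly the two calls to \texttt{quotientDimension} in the paper's Magma code --- no syzygies, no presentation matrix. So the paper's duality trick collapses the verification to the same trivial kind of check you already accepted for Lemma \ref{lemideal}, whereas your reduction, though valid and terminating, leaves the heavy computation unexhibited. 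On the other hand, your lower bound via Corollary \ref{smoothable} --- $[R]$ lies on the smoothable component, which has dimension $5\cdot 12 = 60$, hence $\dim_k T_{[R]}\geq 60$ --- is a genuine addition absent from the paper's proof of this lemma: it shows that only the upper bound requires computation and that $[R]$ is in fact a smooth point of the Hilbert scheme. Combining your lower bound with the paper's duality computation of $\dim_k I_F/I_F^2$ would give the cleanest version of the argument.
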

    \begin{proof}
        The inclusion $V^* \subseteq \mathbb{P}W$ induces an inclusion of
        Hilbert schemes. The tangent space to a~finite projective scheme
        defined by the saturated homogeneous ideal $I$ in the total coordinate
        ring $S = k[W]$ is canonically identified with $\dim_k \Hom_S(I, S/I)$
        (see \cite[Thm 1.1b]{HarDef}).

        View $R$ as the projective scheme defined by the homogenization
        of the ideal $I$, which is easily seen to be saturated.
        We can identify $\Hom_S(I, S/I) = \Hom_S(I/I^2, S/I) =
        \Hom_{S/I}(I/I^2, S/I)$. Now $S/I$ is Gorenstein, thus self injective,
        so $\Hom(-, S/I)$ is exact and by looking on small extensions we see
        that $\dim_k
        \Hom_{S/I}(I/I^2, S/I) = \dim_k
        I/I^2 = \dim_k S/I^2 - \dim_k S/I$. A computer algebra check shows that this dimension is equal
        to $60$ (see source \texttt{quotientDimension}).
    \end{proof}

    \def\Hilbtwelve{\Hilb_{12}\ \mathbb{P}W}
    \def\Hilbtwelvezero{\Hilb_{12}^\circ\ \mathbb{P}W}

    \subsection*{Proof of Theorem \ref{mainthm}}

    \begin{proof}
        \def\Zlocus{\mathcal{G}}
        Recall that we have fixed an inclusion $V^* \subseteq \mathbb{P}W$.
        A local f.d.\ Gorenstein algebra having Hilbert function $(1, 5, 5, 1)$ is isomorphic to the algebra of global sections of
        a~finite closed Gorenstein subscheme of $\mathbb{P}W$.
        The locus $\Zlocus$ of points corresponding to such subschemes is an irreducible
        subset of $\Hilbtwelve$ by \cite[Thm I]{iarrobino}, it sufficies to
        prove that is it contained in $\Hilbtwelvezero$.

        Suppose on contrary that the locus $\Zlocus$ is
        contained in an irreducible component of $\Hilbtwelve$ other
        than the closure of smooth schemes.
        The scheme $R$ defined in Lemma \ref{tangent} is smoothable by Cor. \ref{smoothable}, so $[R]$ lies in
        the intersection of these two components, thus the
        dimension of the Zariski tangent space at $[R]$ is greater than the
        dimension of $\Hilbtwelve$, which is $60$. This
        contradicts Lemma \ref{tangent}.
    \end{proof}

    \section{Magma source code}
    The following {\cite{Magma}} source code may be helpful in proving Lemmas
    \ref{form} and \ref{tangent}. In fact earlier versions of the proof used much
    more of the Magma computational capacity.
    Magma console is available for use on-line at \texttt{http://magma.maths.usyd.edu.au/calc/}.

    \lstset{basicstyle=\ttfamily}
    \begin{lstlisting}
        quotientDimension := function(ideal_)
            return Dimension(quo<Generic(ideal_) | ideal_>);
        end function;

        Q := RationalField();
        P<a1, a2, a3, a4, a5> := PolynomialRing(Q, 5);
        I := Ideal([a1*a2, a1*a3, a1*a4, a2^2 - a1^2, a2*a3,
            a2*a4 - a3^2, a2*a5 - a4^2, a3*a5, a4*a5, a5^2]);

        quotientDimension(I);
        quotientDimension(I^2) - quotientDimension(I);
    \end{lstlisting}

    \end{document}